\documentclass [12pt]{article}
\usepackage{amssymb}
\usepackage{amsmath}
\usepackage{epsfig}
\usepackage{graphicx}
\usepackage{amsthm}
\usepackage{amssymb}
\usepackage{amsfonts}
\usepackage{dirtytalk}
\allowdisplaybreaks
\usepackage{enumerate}
\usepackage{enumerate}
\newtheorem{theorem}{Theorem}
\newtheorem{lemma}{Lemma}

\begin{document}
\thispagestyle{empty}
\null\vspace{-1cm}
\medskip
\vspace{1.75cm}
\centerline{\textbf{{Bounds for the Zeros of Polynomials over Quaternion Division Algebra}}}
~~~~~~~~~~~~~~~~~~~~~~~~~~~~~~~~~~~~~~~~~~~~~~~~~~~~~~~~~~~~~~~~~~~~~~~~~~~~~~~~~~~~~~~~~~~~~~~~~~~~~~~~~~~~~~~~~~~~~~~~~~~~~~~~~~~~~~~~~~~~~~~~~~~~~~~~~~~

\centerline{\bf  {Ovaisa Jan} and  {Idrees Qasim}}
\centerline {Department of Mathematics, National Institute of Technology, Srinagar, India-190006}
\centerline { ovaisa\_2022phamth009@nitsri.ac.in, idreesf3@nitsri.ac.in}
~~~~~~~~~~~~~~~~~~~~~~~~~~~~~~~~~~~~~~~~~~~~~~~~~~~~~~~~~~~~~~~~~~~~~~~~~~~~~~~~~~~~~~~~~~~~~~~~~~~~~~~~~~~~~~~~~~~~~~~~~~~~~~~~~~~~~~~~~~~~~~~~~~~~~~~~~
\vskip0.1in
\textbf{Abstract:} Locating the zeros of quaternionic polynomials is a fundamental problem with significant implications across scientific and engineering disciplines, yet the noncommutative nature of quaternion multiplication makes it fundamentally more complex than the classical complex case. In this paper, we develop new bounds for the zeros of polynomials with quaternionic coefficients. 
  We establish spectral norm inequalities for quaternionic matrices, particularly those of a partitioned form. These inequalities are applied to specialized quaternionic companion matrices to derive novel upper bounds for the zeros of the original polynomial. 
By establishing novel spectral norm inequalities for partitioned quaternionic matrices and utilizing the structural properties of companion matrices and their higher powers, we derive unexplored upper bounds for the zeros of quaternionic polynomials. Our bounds are systematically sharper than existing results and provide a unified framework for zero localization in the quaternionic setting. \\

\noindent {{\bf Keywords:} Quaternionic matrices, Quaternionic polynomials, Right spectral radius, Companion quaternionic matrix.}
	\vspace{0.10in}
	~~~~~~~~~~~~~~~~~~~~~~~~~~~~~~~~~~~~~~~~~~~~~~~~~~~~~~~~~~~~~~~~~~~~~~~~~~~~~~~~~~~~~~~~~~~~~~~~~~~~~~~~~~~~~~~~~~~~~~~~~~~~~~~~~~~~~~~~~~~~~~~~~~~~~~~~~
	
	\noindent {{\bf Mathematics Subject Classiﬁcation (2020):} Primary 46L55, Secondary 44B20.}\\
	\vspace{0.1in}

\section{Introduction:} In recent years, determining the zeros of quaternionic polynomials and establishing rigorous bounds for these polynomials have gained much attention of the researchers because of their wide-ranging applications in both pure and applied sciences, including quantum physics, control theory, computer graphics, and signal processing (see, for instance, \cite{ASL,ASS,CJH,HTL} and the references therein). The study of locating and estimating zeros of quaternionic polynomials traces back to Niven \cite{IN}. Subsequent algorithmic developments were provided by Serodio et al. \cite{SR}, and further investigations into structural properties and computational methods can be found in \cite{GM,JD,O,PS}.\\ \indent Nevertheless, systematic zero localization for quaternionic polynomials remains less advanced compared to the classical complex setting. Initial bounds were established by Janovská and  Opfer \cite{JD,O} while Kalantari \cite{BK} later obtained norm bounds for unilateral quaternionic polynomials. Some recent developments on the location and computation of zeros of quaternionic polynomials can be found in  \cite{ASS,JO,IQ}. Besides, the location of zeros of these polynomials the location of eigenvalues of quaternion polynomials is very well studied. Unlike complex matrices, quaternions admit distinct left eigenvalues and right eigenvalues. Quaternionic matrix have been extensively studied, with particular focus on the existence and location of their left and right eigenvalues. The existence of the left and right eigenvalues for quaternionic matrices has been given in  \cite{BA,BJL,WRM}. The location of the left and right eigenvalues for quaternionic matrices can be found in  \cite{ASL,BA,CJH,ABG,HTL,JQ,BK,PR,RL,WRL, ZF1,ZF2}.  
The intrinsic noncommutativity of quaternion multiplication significantly transforms the framework, resulting in a more complex spectral theory and requiring a rethinking of the foundational matrix analytic methods. We introduce a suitable framework for quaternionic companion matrices and establish extensions of key matrix inequalities for the spectral norm in the quaternionic setting. Since every monic polynomial is the characteristic polynomial of its Frobenius companion matrix. Consequently, the zeros of polynomials are precisely the eigenvalues of companion matrix . This isomorphism allows one to leverage the extensive machinery of matrix analysis.

  \section{Notation and Preliminary Knowledge:}  Throughout the paper the fields of real and complex numbers are denoted by $\mathbb{R}$ and $\mathbb{C}$ respectively. The set of quaternions is denoted by $\mathbb{H}$ and is defined as $$\mathbb{H}:=\{q=a+bi+cj+dk:a, b, c, d \in\mathbb{R} \},$$  
	 
	\noindent where $i^2=j^2=k^2=ijk=-1,~ij=k=-ji,~jk=i=-kj,~ki=j=-ik.$\\
 For $q \in\mathbb{H}$,~ ${\bar{q}}=a-bi-cj-dk$ is the conjugate of $q$ and hence the modulus of a quaternion $q$ is given by $|q|=\sqrt{a^{2}+b^{2}+c^{2}+d^{2}}.$ The set of \(n\)-column vectors with entries in \(\mathbb{H}\) is denoted by \(\mathbb{H}^n\). The inner product for \(x, y \in \mathbb{H}^n\) is \(\langle x, y \rangle := y^H x\), and the norm is \(\|x\| := \sqrt{\langle x, x \rangle}\).
  ${M}_{m\times n}{(\mathbb{H})}$ denotes the space of ${m\times n}$ quaternion matrices and ${M}_{n}{(\mathbb{H})}$ denotes the space of ${n\times n}$ matrices over $\mathbb{H}.$ For $A=[a_{ij}] \in{M}_{n}{(\mathbb{H})}$ the transpose of $A$ is  \(A^T = [a_{ji}]\) and the conjugate transpose of $A$ is \(A^H = \overline{(A^T)}\).  Next we define norms on matrix $A.$
  For a square matrix $A\in M_{n}(\mathbb{H})$
  with enteries $a_{ij}\in\mathbb{H}$, the following norms are defined:\\ 
 1. The 1-Norm: $\|A\|_{1}:=\max\limits_{1\le j \le n}\sum\limits_{i=1}^{n}|a_{ij}|=\|A^{H}\|_{\infty}$\\   
 2. The $\infty$-Norm: $\|A\|_{\infty}:=\max\limits_{1\le i \le n}\sum\limits_{j=1}^{n}|a_{ij}|=\|A^{H}\|_{1}$\\
 3. The 2-Norm (Spectral Norm): $\|A||_{2}:=\sup\limits_{x\neq 0}\left\{ \frac{\|Ax\|_{2}}{\|x\|_{2}}:~x\in\mathbb{H}^{n}                         \right\} =\|A^{H}\|_{2}$\\
 4. The Frobenius Norm: $\|A\|_{F}:={(~\mbox{trace}~   A^{H}A)}^{1/2}.$\\ 
 
\noindent Since quaternion multiplication is not commutative, there exist two types of eigenvalues for a quaternion matrix; left eigenvalues and right eigenvalues  which are defined as follows\\
 Let $A \in M_{n \times n}(\mathbb{H})$ denote the set of $n \times$ n matrices with quaternionic entries. A quaternion $\lambda \in \mathbb{H}$ is a right eigenvalue of A if there exists a non-zero vector $\mathbf{x}\ \in \mathbb{H}^n$  such that $A\mathbf{x}=\mathbf{x}\lambda$.
While as a quaternion $\lambda \in \mathbb{H}$ is a left eigenvalue of A if there exists a non-zero vector $\mathbf{x}\ \in \mathbb{H}^n$  such that: $A\mathbf{x}=\lambda\mathbf{x}$.
The set of all right eigenvalues of $A$ is denoted by $$\sigma_r(A)=\left\{\lambda \in \mathbb{H}: A\mathbf{x}=\mathbf{x}\lambda ~~~\mbox{for some non-zero}~~ \mathbf{x}\in \mathbb{H}^{n} \right\}$$ and the set of all left eigenvalues is denoted by $$\sigma_l(A)=\left\{\lambda \in \mathbb{H}: A\mathbf{x}=\lambda\mathbf{x} ~~~\mbox{for some non-zero}~~ \mathbf{x}\in \mathbb{H}^{n} \right\}.$$  The right and left spectral radii of a matrix $A$ are defined by
$$\rho_{r}(A) = \max \{ |\lambda| : \lambda \in \sigma_r(A) \}$$ and $$\rho_{l}(A) = \max \{ |\lambda| : \lambda \in \sigma_l(A) \}$$ respectively.\\ Next, we recall the definition of quaternionic polynomials.

\noindent\textbf{Quaternion Polynomial:}\\
A quaternion polynomial is a function \(f: \mathbb{H} \to \mathbb{H}\). Due to the non-commutativity of quaternion multiplication, such polynomials are classified into three distinct types: left, right and general.\\  
 $$q_l(z) = q_n z^n + q_{n-1} z^{n-1} + \cdots + q_0,~ ~\mbox{where} ~~ q_{i}, z \in\mathbb{H},~~ (0 \le i \le n)$$ is a left quaternion polynomial of degree $n$ whereas
 $$q_r(z) = z^n q_n + z^{n-1} q_{n-1} + \cdots + q_0 , 
~~\mbox{where}~~ q_{i}, z \in\mathbb{H},~~ (0 \le i \le n).$$ is right quaternion polynomial of degree $n.$ If the leading coefficient $q_{n}=1$, then the above polynomials are said to be monic. General polynomials are defined as \(q_G(z) = q_0 z q_1 z \cdots z q_n + \phi(z)\), where \(\phi(z)\) is a finite sum of lower-degree monomials of the form \(r_0 z^i r_1 \cdots z r_i\) (\(i < n\)). Unlike commutative polynomials, evaluation is side-sensitive, left-evaluation $q_l(z) = q_n z^n + q_{n-1} z^{n-1} + \cdots + q_0$ differs from right-evaluation $q_r(z) = z^n q_n + z^{n-1} q_{n-1} + \cdots + q_0 $ and factorization is inherently non-unique.\\ 
 \indent In this paper we shall take the quaternionic polynomials of the type $q_{r}(z)=\sum_{i=0}^{n}z^iq_i$. Two quaternionic polynomials of this type can be multiplied according to the convolution product (Cauchy multiplication rule): given $q_{r_{1}}(z)=\sum_{i=0}^{n}z^iq_i$ and $q_{r_{2}}(z)=\sum_{j=0}^{n}z^jt_j$ , we define
$$(q_{r_{1}}*q_{r_{2}})(z):=\sum_{i=0,1,\dots, n~~j=0,1,\dots,m}^{}z^{i+j}q_it_j.$$
If $q_{r_{1}}$ has real coefficients, then so called * multiplication coincides with the usual pointwise multiplication.
Please note that the * product in the quaternionic setting is associative but not, in general, commutative. This lack of commutativity results in a behavior of polynomials that is quite distinct from their behavior in the real or complex settings. For instance, a real or complex polynomial of degree $n$ can have at most $n$ (real or complex) zeros, counted with their multiplicity. However, in the quaternionic setting, the second-degree polynomial $q^2+1$ exhibits an infinite number of zeros.\\
For example, the polynomial \( q^2 + 1 \) factorizes infinitely many ways in $\mathbb{H}$, such as \( (q - i)(q + i) \), \( (q - j)(q + j) \), or \( (q - (i+j)/\sqrt{2})(q + (i+j)/\sqrt{2}) \), since its roots include all quaternions \( q \) satisfying \( q^2 = -1 \) (i.e., pure unit quaternions \( ai + bj + ck \) with \( a^2 + b^2 + c^2 = 1 \)). This non-uniqueness arises because roots form conjugacy classes. If \( q \) is a root, so is \( sqs^{-1} \) for any nonzero \( s \in \mathbb{H} \) reflecting the spherical geometry of solutions in \( \mathbb{R}^3 \).

The following result, found in \cite{GS}, presents a complete description of the zero sets of a regular product of two polynomials by relating them to the zero sets of the two individual factors.\\
\textbf{Theorem $A_1$.} Let $f$ and $g$ be given quaternionic polynomials. Then the convolution product $(f*g)(z_0)=0$ if and only if $f(z_0)=0$ or $f(z_0)\neq 0$ implies $g(f(z_0)^{-1}z_0f(z_0))=0$.\\

\noindent Let 
  $$f_l(z) =  z^n + q_{n} z^{n-1} + \cdots + q_{2}z+q_{1}$$ and
 $$f_r(z) = z^n  + z^{n-1} q_{n} + \cdots + zq_{2}+q_{1}  
$$ be a monic polynomial of degree $n\ge 2$ ${where}~~ q_{i}, z \in\mathbb{H},~~ (1 \le i \le n), q_{1}\ne 0.$
 \\ The  companion matrix for the monic quaternionic polynomial \(f_{r}(z)\)and \(f_{l}(z)\) are given by

\[
C_{f_r} = 
\begin{pmatrix}
0 & 0 & \cdots & 0 & -q_{1} \\
1 & 0 & \cdots & 0 & -q_{2} \\
0 & 1 & \cdots & 0 & -q_{3} \\
\vdots & \vdots & \ddots & \vdots & \vdots \\
0 & 0 & \cdots & 1 & -q_{n}
\end{pmatrix}
\]

and \[
C_{f_l} = 
\begin{pmatrix}
0 & 1 & 0 & \cdots & 0 & 0 \\
0 & 0 & 1& \cdots & 0 & 0 \\

\vdots & \vdots & \ddots & \vdots & \vdots \\
0 & 0 & 0 &  \cdots & 0 & 1\\
-q_{1}& -q_{2}& -q_{3} & \cdots & -q_{n-1} & -q_{n}\\
\end{pmatrix}.
\]

\noindent The spectral properties of companion matrices provide a powerful bridge between quaternionic polynomials and their zeros. It is well-known from (Proposition 1, \cite{SR}) that if $\lambda$ is a left eigenvalue of $C_{f_{l}}$, then $\lambda$ is a zero of $f_{l}(z)$ and if $\lambda$ is a left eigenvalue of $C_{f_{l}}$, then by (Corollary 1, {\cite{SR}}) it is also a right eigenvalue. 
Similarly, for the right companion matrix $C_{f_r}$, the left eigenvalues coincide exactly with the zeros of $f_r(z)$ \cite{SR}. Consequently, all zeros of $f_l(z)$ are the right eigenvalues of $C_{f_l}$. However, the converse does not hold in general, not every right eigenvalue of $C_{f_l}$ corresponds to a polynomial zero, as illustrated by specific counterexamples in \cite{KI}.

This subtle distinction between left and right eigenvalues underscores the need for careful spectral analysis when bounding polynomial zeros via companion matrices.

\section{ Bounds for the zeros of quaternionic polynomials}
Several bounds have been established for the zeros of quaternionic polynomials, typically expressed in terms of coefficient magnitudes. Dar et al. \cite{DR} gave proofs of some classical bounds including a bound due to Cauchy, which says that if $z$ is a zero of $f_{r},$ then \\
$$|z|\le 1+ \max_{0\le i \le n}|q_{i}|.$$
In this section, we derive  new bounds for the zeros of quaternionic polynomials by decomposing the companion matrix and its powers into structured parts whose norms can be estimated sharply. Throughout we consider the monic right quaternionic polynomial $f_r$. Suppose that
$$f_r(z) = z^n  + z^{n-1} q_{n} + \cdots + zq_{2}+q_{1}  
$$ be a monic polynomial of degree $n\ge 3,$ $\mbox{where}~~ q_{i}, z \in\mathbb{H},~~(1 \le i \le n)$. If $ q_{1}\ne 0$ then the companion matrix is nonsingular. In this paper, we assume that $q_{1}\ne 0.$
\\ The Frobenius companion matrix $C_{f_{r}}$ of $f_{r}$ is defined as\\
\[
C_{f_r} = 
\begin{pmatrix}
0 & 0 & \cdots & 0 & -q_{1} \\
1 & 0 & \cdots & 0 & -q_{2} \\
0 & 1 & \cdots & 0 & -q_{3} \\
\vdots & \vdots & \ddots & \vdots & \vdots \\
0 & 0 & \cdots & 1 & -q_{n}
\end{pmatrix}.
\]
We have
\[
C^{2}_{f_{r}} = 
\begin{pmatrix}
0 & 0 & 0 & \cdots &0& -q_{1}&q_{1}q_{n} \\
0 & 0 & 0 & \cdots &0& -q_{2} &q_{2}q_{n}-q_{1}\\
1 & 0 & 0 & \cdots &0& -q_{3} &q_{3}q_{n}-q_{2}\\
\vdots & \vdots & \vdots & \ddots & \vdots& \vdots&\vdots \\
0 & 0 & 0 & \cdots &0 & -q_{n-1} &q_{n-1}q_{n}-q_{n-2}\\
0 & 0 & 0 & \cdots &1 & -q_{n} & q^{2}_{n}-q_{n-1}
\end{pmatrix}
\] 
and
\[
C^{3}_{f_{r}} = 
\begin{pmatrix}
0 & 0 & 0 & \cdots &0& -q_{1}&q_{1}q_{n}&q_{1}q_{n-1}-q_{1}q^{2}_{n} \\
0 & 0 & 0 & \cdots &0& -q_{2} &q_{2}q_{n}-q_{1}&q_{2}q_{n-1}-q_{2}q^{2}_{n}+q_{1}q_{n}\\
0 & 0 & 0 & \cdots &0& -q_{3} &q_{3}q_{n}-q_{2}&q_{3}q_{n-1}-q_{3}q^{2}_{n}+q_{2}q_{n}-q_{1}\\
1&0&0&\cdots&0&-q_{4}&q_{4}q_{n}-q_{3}&q_{4}q_{n-1}-q_{4}q^{2}_{n}+q_{3}q_{n}-q_{2}\\
\vdots & \vdots & \vdots & \ddots & \vdots&\vdots&\vdots&\vdots \\
0 & 0 & 0 & \cdots &0 & -q_{n-1} &q_{n-1}q_{n}-q_{n-2}&q^{2}_{n-1}-q_{n-1}q^{2}_{n}+q_{n-2}q_{n}-q_{n-3}\\
0 & 0 & 0 & \cdots &1 & -q_{n} & q^{2}_{n}-q_{n-1}&q_{n}q_{n-1}-q^{3}_{n}+q_{n-1}q_{n}-q_{n-2}
\end{pmatrix}.
\] 
We employ matrix inequalities involving the spectral norm to the powers of  companion matrices which yield progressively tighter bounds for the zeros of $f_{r}.$\\

\noindent For the proof of the theorems we need the following lemmas. The following lemma is mentioned in \cite{ASS}
\begin{lemma}{\label{lemma 1}}
    Let $A= [a_{ij}]\in M_{n}(\mathbb{H)}.$ Then $\|A\|^2_{2}=\|A^{H}\|^{2}_{2}=\|A^{H}A\|_{2}=\|AA^{H}\|_{2}.$
\end{lemma}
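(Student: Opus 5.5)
The plan is to prove Lemma \ref{lemma 1} directly from the definition of the spectral norm as an operator norm on $\mathbb{H}^n$, with the quaternionic inner product $\langle x,y\rangle=y^Hx$. Beyond positivity of the norm, the only facts needed are the adjoint identity $\langle Ax,y\rangle=\langle x,A^Hy\rangle$ and the Cauchy--Schwarz inequality $|\langle x,y\rangle|\le\|x\|\,\|y\|$ in $\mathbb{H}^n$. Both survive noncommutativity. The adjoint identity reads $y^H(Ax)=(A^Hy)^Hx$, which uses $(AB)^H=B^HA^H$; for quaternion matrices this holds because $\overline{pq}=\bar q\,\bar p$. For Cauchy--Schwarz, write $x$ and $y$ as real vectors in $\mathbb{R}^{4n}$. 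Then $\mathrm{Re}\langle x,y\rangle$ is the Euclidean dot product, and rotating $y$ by a unit quaternion scalar on the right reduces the general case to the real one.

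Step 1 is $\|A\|_2^2\le\|A^HA\|_2$. For any $x$,
\[
\|Ax\|^2=\langle Ax,Ax\rangle=\langle A^HAx,x\rangle\le\|A^HAx\|\,\|x\|\le\|A^HA\|_2\|x\|^2 .
\]
Here $\langle A^HAx,x\rangle=x^HA^HAx$ is a nonnegative real number, so Cauchy--Schwarz applies directly. Taking the supremum over $x$ gives the claim.

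Step 2 is the reverse inequality $\|A^HA\|_2\le\|A^H\|_2\|A\|_2$, which is just submultiplicativity of operator norms. Step 3 shows $\|A^H\|_2\le\|A\|_2$: the identity $\|A^Hy\|^2=\langle AA^Hy,y\rangle$ and Cauchy--Schwarz give $\|A^Hy\|^2\le\|A\|_2\|A^Hy\|\,\|y\|$, hence $\|A^Hy\|\le\|A\|_2\|y\|$. Applying this to $A^H$, and using $(A^H)^H=A$, gives equality $\|A\|_2=\|A^H\|_2$.

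Combining the steps yields $\|A\|_2^2\le\|A^HA\|_2\le\|A\|_2^2$, so all three quantities agree. Replacing $A$ by $A^H$ gives $\|AA^H\|_2=\|A^H\|_2^2=\|A\|_2^2$.

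The only delicate point is the quaternionic Cauchy--Schwarz inequality, where the order of multiplication matters. I would handle it through the standard real embedding $\mathbb{H}^n\cong\mathbb{R}^{4n}$, which is isometric for $\|\cdot\|$. Alternatively, one can map $A$ to its complex adjoint matrix $\chi_A\in M_{2n}(\mathbb{C})$. Since $\chi_{A^H}=\chi_A^H$, $\chi_{AB}=\chi_A\chi_B$, and $\|A\|_2=\|\chi_A\|_2$, the lemma then follows immediately from the complex case.
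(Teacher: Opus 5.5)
Your proof is correct, and it takes a different route from the paper only in that the paper gives no proof at all. The paper states the lemma as a known fact, citing \cite{ASS}. It also builds $\|A\|_2=\|A^H\|_2$ into its definition of the spectral norm without justification. Your argument is therefore a self-contained replacement for a citation.

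What you give is the standard $C^*$-identity argument carried over to $\mathbb{H}^n$. It uses only two ingredients:
\begin{itemize}
\item the adjoint relation $y^H(Ax)=(A^Hy)^Hx$, which holds because $(AB)^H=B^HA^H$ follows from $\overline{pq}=\bar q\,\bar p$;
\item a Cauchy--Schwarz inequality.
\end{itemize}
This is what makes it robust to noncommutativity: it needs no spectral theory of quaternion matrices, such as right eigenvalues or a quaternionic SVD. The complex-adjoint route you mention is shorter to state, but it moves the work into proving the isometry $\|A\|_2=\|\chi_A\|_2$, for example via $x=x_1+x_2j$ with $\|x\|^2=\|x_1\|^2+\|x_2\|^2$. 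The citation buys the paper brevity; your proof buys transparency about which structure the lemma actually uses.

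One simplification is available. In both Step 1 and Step 3, the quantity you bound, $x^HA^HAx=\|Ax\|^2$ or $y^HAA^Hy=\|A^Hy\|^2$, is already a nonnegative real number. It therefore equals its real part, which is the Euclidean dot product in $\mathbb{R}^{4n}$. So the real Cauchy--Schwarz inequality suffices, and the unit-quaternion rotation step is unnecessary. If you want the full quaternionic version anyway, $|y^Hx|\le\sum_i|y_i|\,|x_i|\le\|y\|\,\|x\|$, using $|pq|=|p|\,|q|$, is quicker.
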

\begin{lemma}{\label{lemma 2}}
    Let 
    \[F = 
\begin{bmatrix}
0 & 0 & \cdots & 0 & 0 \\
1 & 0 & \cdots & 0 & -q_{1} \\
0 & 1 & \cdots & 0 & -q_{2} \\
\vdots & \vdots & \ddots & \vdots & \vdots \\
0 & 0 & \cdots & 1 & -q_{n-1}
\end{bmatrix}.\]
Then $\|F\|^{2}_{2}=d+1$, where $d=\sum\limits_{j=1}^{n-1}|q_{j}|^{2}.$
\end{lemma}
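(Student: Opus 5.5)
The plan is to compute $\|F\|_2$ through $F^H$, since $\|F\|_2=\|F^H\|_2$ by Lemma~\ref{lemma 1}, and $F^H$ acts on a vector in a very transparent way. Write $F=[e_2,e_3,\dots,e_n,v]$ column-wise. Here $e_k$ is the $k$-th standard basis vector of $\mathbb{H}^n$, and
$$v=(0,-q_1,-q_2,\dots,-q_{n-1})^T,\qquad \|v\|^2=d.$$
Then for $x=(x_1,\dots,x_n)^T\in\mathbb{H}^n$ we have
$$F^Hx=\bigl(x_2,x_3,\dots,x_n,\ v^Hx\bigr)^T,$$
where $v^Hx=-\sum_{j=1}^{n-1}\bar q_j x_{j+1}$. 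Hence
$$\|F^Hx\|^2=\sum_{i=2}^n|x_i|^2+\Bigl|\sum_{j=1}^{n-1}\bar q_jx_{j+1}\Bigr|^2 .$$

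For the upper bound I will apply the triangle inequality in $\mathbb{H}$ together with multiplicativity $|\bar q_j x_{j+1}|=|q_j||x_{j+1}|$. This gives $\bigl|\sum_j\bar q_jx_{j+1}\bigr|\le\sum_j|q_j||x_{j+1}|$. The real Cauchy--Schwarz inequality then bounds this by $\sqrt d\,\bigl(\sum_{i\ge2}|x_i|^2\bigr)^{1/2}$. Therefore
$$\|F^Hx\|^2\le(1+d)\sum_{i=2}^n|x_i|^2\le(1+d)\|x\|^2,$$
so $\|F\|_2^2=\|F^H\|_2^2\le 1+d$. This way of organizing the argument avoids needing any quaternionic Cauchy--Schwarz or spectral theorem, since everything reduces to real moduli.

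For the lower bound I will exhibit a vector attaining equality. If $d>0$, take $x=v$. Then $F^Hv=(-q_1,\dots,-q_{n-1},\,v^Hv)^T=(-q_1,\dots,-q_{n-1},\,d)^T$, so
$$\|F^Hv\|^2=d+d^2=(1+d)\|v\|^2 .$$
If $d=0$, then $F$ is the shift matrix, and $x=e_2$ gives $\|F^He_2\|=1=\|e_2\|$. In both cases $\|F\|_2^2\ge 1+d$, and combining with the upper bound yields $\|F\|_2^2=d+1$.

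The only delicate point is making sure no step silently uses commutativity, in particular in the identity $v^Hv=\sum|q_j|^2$ and in the equality case. Both are safe, because $\bar q\,q=|q|^2$ is real and the equality vector $v$ is built from the $q_j$ themselves.
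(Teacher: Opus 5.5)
Your proof is correct, and it takes a different route from the paper's.

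The paper splits $F=M+N$ into its last column $M$ and the shift part $N$, notes $MN^H=NM^H=0$, and bounds $\|F\|_2^2$ by $\|M^HM\|_2+\|N^HN\|_2=d+1$ using the triangle inequality. It then says the result follows, so only the inequality $\|F\|_2^2\le d+1$ is actually established. There is also a slip in its displayed identity $(M+N)^H(M+N)=M^HM+N^HN$: this would need $M^HN=0$, which fails here, since the last row of $M^HN$ is $(-\bar q_1,\ldots,-\bar q_{n-1},0)$. What the stated relations do give is $FF^H=MM^H+NN^H$, and together with Lemma 1 this yields the same bound.

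You instead compute $F^Hx$ explicitly and reduce the upper bound to the triangle inequality in $\mathbb{H}$ plus real Cauchy--Schwarz. You then supply the matching lower bound with an explicit extremal vector ($v$, or $e_2$ when $d=0$). So your argument proves the equality as stated, which the paper's does not, and it sidesteps the $F^HF$ versus $FF^H$ issue.

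The paper's decomposition has a different advantage: it is a reusable template. The same ``orthogonal column blocks plus triangle inequality'' scheme drives the later bounds for $C_{f_r}^3$, $C_{f_{r_1}}^2$ and $C_{f_{r_1}}^3$. There an explicit formula for $A^Hx$ would be much messier, and only the upper bound is needed anyway.
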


\begin{proof}
    Decompose $F=M+N,$ where \\
    \[M = 
\begin{bmatrix}
0 & 0 & \cdots & 0 & 0 \\
0 & 0 & \cdots & 0 & -q_{1} \\
0 & 0 & \cdots & 0 & -q_{2} \\
\vdots & \vdots & \ddots & \vdots & \vdots \\
0 & 0 & \cdots & 0 & -q_{n-1}
\end{bmatrix}
,
\quad
N = 
\begin{bmatrix}
0 & 0 & \cdots & 0 & 0 \\
1 & 0 & \cdots & 0 & 0 \\
0 & 1 & \cdots & 0 & 0 \\
\vdots & \vdots & \ddots & \vdots & \vdots \\
0 & 0 & \cdots & 1 & 0
\end{bmatrix}\]
Then $MN^{H}=NM^{H}=0.$ So, by triangle inequality, we have 
\begin{eqnarray*}
    \|F\|^2_{2} &=&\|(M +N )^{H}(M +N)\|_{2}\\ &=& \|M^{H}M+N^{H}N\|_{2}\\&\le&  \|M^{H}M\|_{2} + \|N^{H}N\|_{2} \\&=&  \sum_{j=1}^{n-1}|q_{j}|^{2}+1.
\end{eqnarray*}
Hence the result follows.
\end{proof}
\noindent The next lemma can be found in \cite{ASS}.
\begin{lemma}
    If $\|.\|_{\beta}$ ($\beta=1,2,\infty, F),$ are the quaternionic matrix norms and if $A \in  M_{n}(\mathbb{H}).$ Then
    $\rho_{l}(A),\rho_{r}(A)\le \|A\|_{\beta}$.
\end{lemma}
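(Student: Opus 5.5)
The idea is to bound each eigenvalue by an operator norm, because an eigen-equation immediately gives a norm inequality. Fix $A\in M_n(\mathbb{H})$ and let $\|\cdot\|_\beta$ be one of the norms $\beta=1,2,\infty,F$. Take $\lambda\in\sigma_l(A)$ with eigenvector $x\neq 0$, so $Ax=\lambda x$. Since $|\lambda y|=|\lambda||y|$ for all $y\in\mathbb{H}$, we have $\|\lambda x\|=|\lambda|\,\|x\|$ for the Euclidean vector norm, and likewise for the $1$- and $\infty$-vector norms. The same identity holds for right eigenvalues: if $Ax=x\lambda$, then $\|x\lambda\|=\|x\|\,|\lambda|$, because the quaternion modulus is multiplicative on both sides.

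\textbf{Case $\beta=2$.} By the definition of $\|A\|_2$ as a supremum,
\[|\lambda|\,\|x\|_2=\|Ax\|_2\le \|A\|_2\|x\|_2,\]
and dividing by $\|x\|_2\neq0$ gives $|\lambda|\le\|A\|_2$.

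\textbf{Cases $\beta=1$ and $\beta=\infty$.} I will use the vector norms $\|x\|_1=\sum_i|x_i|$ and $\|x\|_\infty=\max_i|x_i|$. The triangle inequality in $\mathbb{H}$ together with multiplicativity of the modulus gives
\[\|Ax\|_\infty=\max_i\Big|\sum_j a_{ij}x_j\Big|\le \max_i\sum_j|a_{ij}|\,\|x\|_\infty=\|A\|_\infty\|x\|_\infty.\]
Similarly,
\[\|Ax\|_1\le\sum_j\Big(\sum_i|a_{ij}|\Big)|x_j|\le\|A\|_1\|x\|_1.\]
Each inequality then yields $|\lambda|\le\|A\|_\infty$, respectively $|\lambda|\le\|A\|_1$, exactly as in the case $\beta=2$.

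\textbf{Case $\beta=F$.} I reduce to the spectral norm by showing $\|A\|_2\le\|A\|_F$. For each row $a_i$ of $A$, Cauchy--Schwarz in $\mathbb{H}^n$ gives
\[\Big|\sum_j a_{ij}x_j\Big|\le\Big(\sum_j|a_{ij}|^2\Big)^{1/2}\|x\|_2.\]
This follows by applying real Cauchy--Schwarz to the moduli, since $|a_{ij}x_j|=|a_{ij}||x_j|$. Summing the squares over $i$ gives $\|Ax\|_2^2\le\big(\sum_{i,j}|a_{ij}|^2\big)\|x\|_2^2$. Finally, $\operatorname{trace}(A^HA)=\sum_{i,j}|a_{ij}|^2$, because the $(j,j)$ entry of $A^HA$ is $\sum_i\overline{a_{ij}}a_{ij}=\sum_i|a_{ij}|^2$. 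Hence $\|A\|_2\le\|A\|_F$, and the $\beta=F$ case follows from the $\beta=2$ case.

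Taking the maximum over all $\lambda\in\sigma_l(A)$ and over all $\lambda\in\sigma_r(A)$ gives $\rho_l(A),\rho_r(A)\le\|A\|_\beta$.

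The only delicate point is noncommutativity. It causes no trouble here, because every estimate uses only $|pq|=|p||q|$ and the triangle inequality for the modulus, and both hold regardless of the order of the factors. I therefore expect no step to be a real obstacle; the main care needed is confirming the trace identity for $\|A\|_F$.
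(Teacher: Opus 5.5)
Your proof is correct and complete. The paper gives no proof of this lemma: it quotes it from Ahmad and Ali \cite{ASS}. So your proposal supplies a self-contained argument where the paper relies on an external reference.

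Your route works entirely at the level of vectors:
\begin{itemize}
\item For $\beta=1,2,\infty$ you verify directly that $\|Ax\|_\beta\le\|A\|_\beta\|x\|_\beta$ for the matching vector norm.
\item For $\beta=F$ you reduce to the spectral case via $\|A\|_2\le\|A\|_F$. Your check that $\mathrm{trace}(A^HA)=\sum_{i,j}|a_{ij}|^2$ is right, since $\overline{a_{ij}}a_{ij}=|a_{ij}|^2$ is real.
\end{itemize}
The advantage is that you use nothing beyond $|pq|=|p|\,|q|$ and the triangle inequality in $\mathbb{H}$. Left and right eigenvalues are therefore handled by the same computation, and noncommutativity never enters.

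An equivalent matrix-level formulation, standard in the complex case, sets $X=[x\ 0\ \cdots\ 0]$. Then $AX=\lambda X$ or $AX=X\lambda$, and one combines $\|\lambda X\|_\beta=\|X\lambda\|_\beta=|\lambda|\,\|X\|_\beta$ with submultiplicativity of $\|\cdot\|_\beta$. For $\beta=F$ the inequality needed, $\|AX\|_F\le\|A\|_F\|X\|_F$, is exactly your estimate $\|Ax\|_2\le\|A\|_F\|x\|_2$. The two presentations therefore carry the same content; yours simply avoids having to establish submultiplicativity of the four norms on $M_n(\mathbb{H})$ first.
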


\begin{theorem}
Let $f_r(z) = z^n  + z^{n-1} q_{n} + \cdots zq_{2}+ q_1 $ be a quaternion monic polynomial. Then the zeros of $f_r(z)$ satisfies the following inequality
\[
|z| \leq \alpha + \sqrt{1+\sum_{j=1}^{n-1} |q_j|^2 },
\] where $\alpha=\sqrt{\sum_{j=1}^{n} |q_j - q_{j-1}|^2}, ~q_{0}=0.$
\end{theorem}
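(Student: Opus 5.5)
The plan is to split the companion matrix $C_{f_r}$ into two pieces: one is exactly the matrix $F$ of Lemma \ref{lemma 2}, and the other is a rank-one matrix carrying the successive differences $q_j-q_{j-1}$. Compare the last column of $C_{f_r}$, which is $(-q_1,-q_2,\dots,-q_n)^T$, with the last column of $F$, which is $(0,-q_1,\dots,-q_{n-1})^T$. The subdiagonal ones of the two matrices coincide. So we write
\[
C_{f_r}=F+G,\qquad G=\begin{bmatrix} 0 & \cdots & 0 & -(q_1-q_0)\\ 0 & \cdots & 0 & -(q_2-q_1)\\ \vdots & & \vdots & \vdots\\ 0&\cdots&0&-(q_n-q_{n-1})\end{bmatrix},
\]
with the convention $q_0=0$. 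Thus $G$ has a single nonzero column, whose $j$-th entry is $-(q_j-q_{j-1})$.

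Next we estimate the two norms. For $G$, Lemma \ref{lemma 1} gives $\|G\|_2^2=\|G^HG\|_2$. The matrix $G^HG$ has only one nonzero entry, in position $(n,n)$, equal to $\sum_{j=1}^n|q_j-q_{j-1}|^2=\alpha^2$. Hence $\|G\|_2=\alpha$; alternatively, $\|Gx\|_2=|x_n|\,\alpha\le \alpha\|x\|_2$, with equality at $x=e_n$. For $F$, Lemma \ref{lemma 2} gives $\|F\|_2\le\sqrt{1+\sum_{j=1}^{n-1}|q_j|^2}$. Only this upper bound is needed, and it is exactly what the proof of that lemma establishes.

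Then we combine. The spectral norm is defined as a supremum of $\|Ax\|_2/\|x\|_2$, so the triangle inequality $\|F+G\|_2\le\|F\|_2+\|G\|_2$ holds directly from $\|(F+G)x\|\le\|Fx\|+\|Gx\|$ in $\mathbb{H}^n$. Now let $z$ be a zero of $f_r$. By the result of \cite{SR} recalled in the preliminaries, $z$ is a left eigenvalue of $C_{f_r}$. The preceding lemma (with $\beta=2$) then gives
\[
|z|\le\rho_l(C_{f_r})\le\|C_{f_r}\|_2\le \|F\|_2+\|G\|_2\le \alpha+\sqrt{1+\sum_{j=1}^{n-1}|q_j|^2},
\]
which is the claimed bound.

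The only delicate point is the eigenvalue link: zeros of the right polynomial $f_r$ correspond to left eigenvalues of $C_{f_r}$. This is why the bound has to go through $\rho_l$ and not $\rho_r$. The preceding lemma covers both spectral radii, so this causes no difficulty; everything else is direct norm computation.
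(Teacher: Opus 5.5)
Your proposal is correct and is essentially the paper's proof. It uses the same splitting of $C_{f_r}$ into the matrix $F$ of Lemma 2 plus the single-column matrix of differences (the paper's $P+Q$), then the triangle inequality, Lemma 2 for $\|F\|_2$, the direct computation $\|G\|_2=\alpha$, and a spectral-radius bound. Your version is slightly cleaner in two places: you go through $\rho_l$, which matches the stated correspondence between zeros of $f_r$ and left eigenvalues of $C_{f_r}$ (the paper writes $\rho_r$ there), and you correctly use $\|F\|_2\le\sqrt{1+\sum_{j=1}^{n-1}|q_j|^2}$ where the paper misprints $\|P\|_2=1+\sqrt{\sum_{j=1}^{n-1}|q_j|^2}$.
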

\begin{proof} Consider the companion matrix $Cf_{r}$. Decompose it as $C_{f_r}=P+Q$ 
with

\[
P = \begin{pmatrix}
0 & 0 & 0 & \cdots & 0&0 \\
1 & 0 & 0 & \cdots &0& -q_{1} \\
\vdots & \vdots & \vdots & \ddots & \vdots &\vdots\\
0 & 0 & 0 & \cdots & 0&-q_{n-2} \\
0 & 0 & 0 & \cdots & 1&-q_{n-1}
\end{pmatrix}
,
\quad
Q = \begin{pmatrix}
0 & 0 & 0 & \cdots & 0&-q_{1} \\
0 & 0 & 0 & \cdots &0& -q_{2}+q_{1} \\
\vdots & \vdots & \vdots & \ddots & \vdots &\vdots\\
0 & 0 & 0 & \cdots & 0&-q_{n-1}+q_{n-2} \\
0 & 0 & 0 & \cdots & 0&-q_{n}+q_{n-1}
\end{pmatrix}.
\]These matrices satisfy $PQ^{H}=QP^{H}=0.$
 By the triangle inequality, we have  
 $\|C_{f_r}\|_{2
 }\le \|P\|_{2}+\|Q\|_{2}.$
 By using Lemma \ref{lemma 2}, we have \\
 $$\|P\|_{2}= 1+ \sqrt{\sum_{j=1}^{n-1} |q_j|^2 }$$ and  $$||Q||_{2}= \sqrt{\sum_{j=1}^{n} |q_j - q_{j-1}|^2}=\alpha,$$ where $q_{0}=0.$\\ Consequently
 $$\|C_{f_r}\|_{2}\le \alpha+ \sqrt{\sum_{j=1}^{n-1} |q_j|^2 + 1},$$ where $\sqrt{\sum_{j=1}^{n} |q_j - q_{j-1}|^2},~ q_{0}=0.$ Since each zero $z$ of $f_{r}(z)$ is an eigenvalue of $C_{f_r},$ we have $|z|\le \rho_{r}(C_{f_{r}})\le \|C_{f_{r}}\|_{2}.$ Therefore, 
 $|z|\le \|C_{f_r}\|_{2}$ for every zero $z$ of $f_r.$ 
\end{proof}

\begin{theorem}
    If $z$ is a zero of $f_{r},$ then\\
    $$|z|\le \left(1+\alpha+\beta+\sum_{j=1}^{n}|q_{j}|^{2}\right)^\frac{1}{6},$$
    where $\alpha=\sum_{j=1}^{n}|q_{j}q_{n-1}-q_{j}q^{2}_{n}+q_{j-1}q_{n}-q_{j-2}|^{2},~~\beta=\sum_{j=1}^{n}|q_{j}q_{n}-q_{j-1}|^{2},~ q_{-1}=q_{0}=0.$ 
\end{theorem}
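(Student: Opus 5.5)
The plan is to work with the cube $C^{3}_{f_r}$ instead of $C_{f_r}$. If $z$ is a zero of $f_r$, then $z$ is an eigenvalue of $C_{f_r}$: there is $x\neq 0$ with $C_{f_r}x=xz$ (or $zx$ in the left version). Applying $C_{f_r}$ twice more gives $C^{3}_{f_r}x=xz^{3}$ (respectively $z^{3}x$), so $z^{3}$ is an eigenvalue of $C^{3}_{f_r}$. The preceding lemma on spectral radii, $\rho_l,\rho_r\le\|\cdot\|_2$, then yields
$$|z|^{3}\le \|C^{3}_{f_r}\|_{2},\qquad\text{hence}\qquad |z|^{6}\le \|C^{3}_{f_r}\|_{2}^{2}=\|(C^{3}_{f_r})^{H}C^{3}_{f_r}\|_{2}$$
by Lemma \ref{lemma 1}. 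Everything therefore reduces to showing
$$\|(C^{3}_{f_r})^{H}C^{3}_{f_r}\|_{2}\le 1+\alpha+\beta+\sum_{j=1}^{n}|q_j|^{2}.$$

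\textbf{Column decomposition.} From the displayed form of $C^{3}_{f_r}$, the first $n-3$ columns are distinct standard unit vectors $e_4,\dots,e_n$. The last three columns are:
\begin{itemize}
\item $u=(-q_j)_{j}$, whose squared norm is $\sum_{j=1}^{n}|q_j|^{2}$;
\item $v=(q_jq_n-q_{j-1})_{j}$, whose squared norm is $\beta$;
\item $w=(q_jq_{n-1}-q_jq_n^{2}+q_{j-1}q_n-q_{j-2})_{j}$, whose squared norm is $\alpha$.
\end{itemize}
For this I first verify that the general entry of column $n$ really is $q_jq_{n-1}-q_jq_n^2+q_{j-1}q_n-q_{j-2}$, with $q_{-1}=q_0=0$. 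This follows from $C^{3}=C^{2}C$: the last column of $C^{3}_{f_r}$ equals $C^{2}_{f_r}$ applied to the last column of $C_{f_r}$, and equivalently one can iterate the recursion $(Cx)_j=x_{j-1}-q_jx_n$.

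Write $C^{3}_{f_r}=N+U+V+W$, where $N$ carries the unit-vector columns and $U,V,W$ each carry a single one of $u,v,w$ in its own column position. All four summands have pairwise disjoint column supports, so each cross term like $N^{H}U$ is off the block diagonal. I then bound
$$\|(C^{3})^{H}C^{3}\|_2\le \|N^{H}N\|_2+\|U^{H}U\|_2+\|V^{H}V\|_2+\|W^{H}W\|_2=1+\sum|q_j|^2+\beta+\alpha,$$
imitating the argument of Lemma \ref{lemma 2} and Theorem 1. There the authors use the orthogonality relations $MN^{H}=0$ to discard cross terms and then the triangle inequality.

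\textbf{Main obstacle.} This is the justification that the cross terms disappear. For $(C^{3})^{H}C^{3}$ they do not vanish in general, since $u,v,w$ need not be orthogonal to each other or to $e_4,\dots,e_n$. I would follow the paper's own pattern by asserting the analogous relations (e.g.\ $UV^{H}$-type products vanishing) in the same way as in Lemma \ref{lemma 2}. If a more honest route is needed, I would instead use $\|A\|_2\le\|A\|_F$ (the earlier lemma with $\beta=F$). This gives directly
$$\|C^{3}_{f_r}\|_2^{2}\le \|C^{3}_{f_r}\|_F^{2}=(n-3)+\sum_{j=1}^{n}|q_j|^{2}+\beta+\alpha,$$
and I would aim to tighten the constant $n-3$ to $1$ via the block $N$ being a partial isometry, i.e.\ $\|N\|_2=1$, combined with a subadditivity estimate on the Gram matrix. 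That tightening step is where I expect the real difficulty to lie.
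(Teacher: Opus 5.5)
\textbf{The central norm estimate is never proved.} Everything depends on $\|C^3_{f_r}\|_2^2\le 1+\alpha+\beta+\sum_j|q_j|^2$, and neither of your options delivers it. Option (a), ``asserting the analogous relations,'' would mean asserting something you yourself correctly note is false for $(C^3_{f_r})^HC^3_{f_r}$: for example, $U^HV$ has the generally nonzero entry $u^Hv$ in position $(n-2,n-1)$. Option (b), via $\|\cdot\|_F$, gives $n-3$ in place of $1$, which proves the theorem only for $n\le 4$, and the tightening is left open.

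The missing idea is to use the other Gram matrix. Lemma~1 also gives $\|A\|_2^2=\|AA^H\|_2$. If $X,Y$ have disjoint column supports, then $XY^H=\sum_k x_k y_k^H=0$, where $x_k,y_k$ are the $k$-th columns, because for every $k$ one of $x_k,y_k$ vanishes. These are exactly the relations $MN^H=NM^H=\cdots=0$ that the paper lists (its $M,N,L,G$ are your $W,V,U,N$). They give
\[
C^3_{f_r}\bigl(C^3_{f_r}\bigr)^H=NN^H+uu^H+vv^H+ww^H
\]
with no cross terms. Since $NN^H=\mathrm{diag}(0,0,0,1,\dots,1)$ has norm at most $1$ and $\|xx^H\|_2=\|x\|^2$, the triangle inequality immediately gives $\|C^3_{f_r}\|_2^2\le 1+\sum_j|q_j|^2+\beta+\alpha$. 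Your ``partial isometry plus subadditivity'' idea works verbatim once it is applied to $AA^H$ rather than $A^HA$, and that is the paper's argument. (The paper's written proof of Lemma~2 uses $(M+N)^H(M+N)$, which is the wrong order for the relations it verifies. That may be what steered you toward $A^HA$.)

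A smaller point: the left-eigenvalue version of your first step fails. For a quaternionic matrix $C$, $C(zx)\neq z(Cx)$ in general, so $Cx=zx$ does not imply $C^3x=z^3x$. The right-eigenvalue version iterates correctly, but you would need to justify that a zero of $f_r$ is a right eigenvalue of $C_{f_r}$. A cleaner self-contained route uses the row vector $y=(1,z,\dots,z^{n-1})$. A direct check shows $yC_{f_r}=zy$ if and only if $f_r(z)=0$. Hence $yC^3_{f_r}=z^3y$, and $|z|^3\|y\|=\|(C^3_{f_r})^Hy^H\|\le\|C^3_{f_r}\|_2\|y\|$ with $y\neq 0$.
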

\begin{proof}
    Consider $C_{f_{r}}^3$ and use a decomposition strategy similar to Theorem 2. \\Let
\[
C_{f_r}^3 = M + N + L + G,
\]
\[M = 
\begin{bmatrix}
0 & 0 & \cdots & 0&0 & q_{1}q_{n-1}-q_{1}q^{2}_{n} \\
0 & 0 & \cdots & 0&0& q_{2}q_{n-1}-q_{2}q^{2}_{n}+q_{1}q_{n}\\
0 & 0 & \cdots & 0 &0& q_{3}q_{n-1}-q_{3}q^{2}_{n}+q_{2}q_{n}-q_{1}\\
\vdots  &\vdots & \ddots &\vdots& \vdots & \vdots \\
0 & 0 & \cdots & 0 & 0&q^{2}_{n-1}-q_{n-1}q^{2}_{n}+q_{n-2}q_{n}-q_{n-3}\\
0 & 0 & \cdots & 0 & 0&q_{n}q_{n-1}-q^{3}_{n}+q_{n-1}q_{n}-q_{n-2}
\end{bmatrix},
\]
\[
N = 
\begin{bmatrix}
0 & 0 & \cdots & 0&q_{1}q_{n} & 0 \\
0 & 0 & \cdots & 0&q_{2}q_{n}-q_{1}& 0 \\
0 & 0 & \cdots & 0 &q_{3}q_{n}-q_{2}& 0 \\
\vdots  &\vdots & \ddots &\vdots& \vdots & \vdots \\
0 & 0 & \cdots & 0 & q_{n-1}q_{n}-q_{n-2}&0\\
0 & 0 & \cdots & 0 &q^{2}_{n}-q_{n-1}&0
\end{bmatrix}
\quad
L= 
\begin{bmatrix}
0 & 0 & \cdots  &-q_{1} & 0 &0\\
0 & 0 & \cdots  &-q_{2}& 0&0 \\
0 & 0 & \cdots  &-q_{3}& 0&0 \\
\vdots  &\vdots &  &\vdots& \vdots & \vdots \\
0 & 0 & \cdots  & -q_{n-1}&0&0\\
0 & 0 & \cdots  & -q_{n}&0&0
\end{bmatrix},
\]
\[
 G=\begin{bmatrix}
    0&0\\I_{n-3}&0
\end{bmatrix},\]
where $I_{n-3} $ is identity matrix of order $n-3.$
Then
\[
 MN^{H}=NM^{H}=ML^{H}= LM^{H} = NL^{H}=L N^{H}\]\[=MG^{H}=GM^{H}=NG^{H}=GN^{H} =LG^H=GL^{H}=0.
\]
Thus by Lemma \ref{lemma 2} and triangle inequality, we get\\

\[
\|C_{f_{r}}^3\|^2_{2} \leq 1 + \alpha+\beta + \sum_{j=1}^{n} |q_j|^2,
\]
 where 
$\alpha=\sum_{j=1}^{n}|q_{j}q_{n-1}-q_{j}q^{2}_{n}+q_{j-1}q_{n}-q_{j-2}|^{2},~~\beta=\sum_{j=1}^{n}|q_{j}q_{n}-q_{j-1}|^{2},~ q_{-1}=q_{0}=0.$
Therefore:
\[
\|C_{f_{r}}^3\|_{2} \leq \left( 1 + \alpha+\beta + \sum_{j=1}^{n} |q_j|^2\right)^{1/2}.
\]

\noindent For any zero $z$ of $f_{r}$ , we have $|z|^{3}\le \rho_{r}(C_{f_{r}}) \leq \|C_{f_{r}}^3\|_{2}$ for every zero  $z$ of $C_{f_{r}}$, Therefore,
\[
|z| \leq \left(1 + \alpha+\beta + \sum_{j=1}^{n} |q_j|^2 \right)^{1/6}.\]
\end{proof}

\noindent To derive the additional bounds, we define the auxiliary polynomial:
\begin{eqnarray*}
    f_{r_{1}}(z)&=& f_{r}(z)*(q_{n} -z)\\&=& z^{n+1}-z^{n-1}v_{n}-z^{n-2}v_{n-1}-\dots- zv_{2}-v_{1},
\end{eqnarray*}
where $v_{j}=q_{j}q_{n}-q_{j-1}$ for $j=1,2,\dots,n,$ with $q_{0}=0$.
   By Theorem $A_{5}$ $f_{r}(z)*(q_{n} -z)=0$ if and only if either $f_{r}(z)=0$ or  $f_{r}(z)\neq 0$ implies $f_{r}(z)^{-1}zf_{r}(z)- q_{n}=0$. Notice
   that  $f_{r}(z)^{-1}zf_{r}(z)- q_{n}=0$ is equivalent to  $f_{r}(z)^{-1}zf_{r}(z)=q_{n}$ and if $f_{r}(z)\neq 0$ this implies that $z=q_{n}$. So the only zeros of $f_{r}(z)*(q_{n} -z)$ are $z=q_{n}$ and the zeros of $f_{r}(z)$.
   The corresponding companion matrix $Cf_{r_{1}}$ of $f_{r_{1}}$ is given by
   \[
C_{f_{r_{1}}} = 
\begin{pmatrix}
0 & 0 & 0 & \cdots &0& v_{1} \\
1 & 0 & 0 & \cdots &0& v_{2} \\
\vdots & \vdots & \vdots & \ddots & \vdots \\
0 & 0 & 0 & \cdots &0 & v_{n} \\
0 & 0 & 0 & \cdots &1& 0
\end{pmatrix}.
\] 
We have
  \[
C^{2}_{f_{r_{1}}} = 
\begin{pmatrix}
0 & 0 & 0 & \cdots &0& v_{1}&0 \\
0 & 0 & 0 & \cdots &0& v_{2} &v_{1}\\
1 & 0 & 0 & \cdots &0& v_{3} &v_{2}\\
\vdots & \vdots & \vdots & \ddots & \vdots \\
0 & 0 & 0 & \cdots &0 & v_{n} &v_{n-1}\\
0 & 0 & 0 & \cdots &1 & 0 & v_{n}
\end{pmatrix}
\] 
\[
C^{3}_{f_{r_{1}}} = 
\begin{pmatrix}
0 & 0 & 0 & \cdots 0&0& v_{1}&0 &v_{1}v_{n}\\
0 & 0 & 0 & \cdots0 & 0 & v_{2}&v_{1} &v_{2}v_{n}\\
0 & 0 & 0 & \cdots0 & 0 & v_{3}&v_{2} &v_{3}v_{n}+v_{1}\\
1 & 0 & 0 & \cdots0 & 0 & v_{4}&v_{3} &v_{4}v_{n}+v_{2}\\
\vdots & \vdots & \vdots &\ddots & \vdots & \vdots & \vdots & \vdots  \\
0 & 0 & 0 & \cdots0 & 0 & v_{n-1}&v_{n-2} &v_{n-1}v_{n}+v_{n-3}\\
0 & 0 & 0 & \cdots 1 & 0 & v_{n}&v_{n-1} &v^{2}_{n}+v_{n-2}\\
0 & 0 & 0 & \cdots 0 & 1 & 0 & v_{n} &v_{n-1}
\end{pmatrix}.
\]

\begin{theorem}
Let $f_{r_{1}}(z)= z^{n+1}-z^{n-1}v_{n}-z^{n-2}v_{n-1}-\dots- zv_{2}-v_{1}.$ Then for any zero $z$ of $f_{r_{1}}$, we have:
\[
|z| \leq \left(1 + 2 \sum_{j=1}^{n} |v_j|^2 \right)^{1/4}.\]
\end{theorem}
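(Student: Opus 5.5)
We will bound the spectral norm of $C^{2}_{f_{r_{1}}}$ and use $|z|^{2}\le \rho_{r}(C^{2}_{f_{r_{1}}})\le \|C^{2}_{f_{r_{1}}}\|_{2}$. Since the power $1/4$ appears, the target estimate is
\[
\|C^{2}_{f_{r_{1}}}\|^{2}_{2}\le 1+2\sum_{j=1}^{n}|v_{j}|^{2}.
\]
The relation between zeros and the spectral radius of the square comes from the facts recalled earlier. A zero $z$ of $f_{r_{1}}$ is a left eigenvalue of $C_{f_{r_{1}}}$, so $C_{f_{r_{1}}}x=zx$ for some $x\neq 0$. Then $C^{2}_{f_{r_{1}}}x=z^{2}x$, so $z^{2}$ is a left eigenvalue of $C^{2}_{f_{r_{1}}}$. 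Lemma 3 then gives $|z|^{2}=|z^{2}|\le \rho_{l}(C^{2}_{f_{r_{1}}})\le \|C^{2}_{f_{r_{1}}}\|_{2}$. Working with left eigenvalues avoids any issue with right eigenvalues of powers.

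For the norm estimate, follow the decomposition strategy of Theorems 1 and 2. Write $C^{2}_{f_{r_{1}}}=M+N+G$, where:
\begin{itemize}
\item $M$ keeps only the last column $(0,v_{1},v_{2},\dots,v_{n})^{T}$;
\item $N$ keeps only the $n$-th column $(v_{1},\dots,v_{n},0)^{T}$;
\item $G$ is the shift part, carrying the $1$'s in positions $(j+2,j)$ for $j=1,\dots,n-1$.
\end{itemize}
These three matrices have their nonzero entries in pairwise disjoint columns. Hence $X^{H}Y=0$ for distinct $X,Y\in\{M,N,G\}$, so
\[
(C^{2}_{f_{r_{1}}})^{H}C^{2}_{f_{r_{1}}}=M^{H}M+N^{H}N+G^{H}G.
\]
This is the orthogonality that actually matters for Lemma 1. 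The paper states it in the form $MN^{H}=0$, which I would restate as $M^{H}N=0$, the version that holds for column-disjoint matrices.

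Next apply Lemma 1 and the triangle inequality:
\[
\|C^{2}_{f_{r_{1}}}\|_{2}^{2}=\|M^{H}M+N^{H}N+G^{H}G\|_{2}\le \|M\|^{2}_{2}+\|N\|^{2}_{2}+\|G\|^{2}_{2}.
\]
Each of $M$ and $N$ has a single nonzero column, so $M^{H}M$ and $N^{H}N$ are diagonal with one nonzero entry. This gives $\|M\|_{2}^{2}=\|N\|_{2}^{2}=\sum_{j=1}^{n}|v_{j}|^{2}$. The matrix $G$ is a partial permutation matrix, so $G^{H}G$ is a diagonal $0/1$ matrix and $\|G\|_{2}^{2}=1$. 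Summing gives the target bound $1+2\sum_{j=1}^{n}|v_{j}|^{2}$, and combining with the first paragraph gives
\[
|z|\le \Bigl(1+2\sum_{j=1}^{n}|v_{j}|^{2}\Bigr)^{1/4}.
\]

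The main obstacle is the bookkeeping of the structure of $C^{2}_{f_{r_{1}}}$. One must check that the columns of $M$, $N$ and $G$ really are disjoint, and the displayed form of $C^{2}_{f_{r_{1}}}$ must be correct. This requires a short computation with the quaternionic entries: the last row of $C_{f_{r_{1}}}$ is $e_{n}^{T}$, so the $(n+1)$-th column of $C^{2}_{f_{r_{1}}}$ is $C_{f_{r_{1}}}(v_{1},\dots,v_{n},0)^{T}$. Because the left factor has only $0/1$ entries in its shift part and the last entry is $0$, no noncommutative products appear. Finally, the statement concerns all zeros of $f_{r_{1}}$ (namely the zeros of $f_{r}$ together with $q_{n}$), so nothing beyond the eigenvalue correspondence is needed.
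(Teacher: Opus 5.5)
Your norm estimate uses the same decomposition $C^{2}_{f_{r_1}}=M+N+G$ as the paper. However, two steps are wrong as written, and the first is a genuine gap.

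\textbf{From zeros to $C^{2}_{f_{r_1}}$.} For a quaternionic matrix, $Cx=zx$ does not give $C^{2}x=z^{2}x$. That would need $C(zx)=z(Cx)$, which fails unless $z$ commutes with the entries of $C$.

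Write $C=C_{f_{r_1}}=S+ve_{n+1}^{T}$, with $S$ the real shift and $v=(v_1,\dots,v_n,0)^{T}$. Then $C^{2}x-z^{2}x=C(zx)-z(Cx)$ is the vector with entries $(v_iz-zv_i)\,x_{n+1}$. Moreover $x_{n+1}\neq 0$ for every left eigenvector: the rows give $x_{i-1}=zx_i-v_ix_{n+1}$, so $x_{n+1}=0$ forces $x=0$. Hence $C^{2}x\neq z^{2}x$ as soon as some $v_i$ fails to commute with $z$.

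Your remark that left eigenvalues avoid trouble with powers is backwards. Right eigenvalues are the ones that pass to powers, because $C(xz)=(Cx)z$. So $Cx=xz$ gives $C^{2}x=xz^{2}$, and then $|z|^{2}\|x\|=\|C^{2}x\|\le\|C^{2}\|_2\|x\|$. This is the route the paper takes implicitly, through $\rho_r$.

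What you are missing is the fact that every zero $z$ of $f_{r_1}$ is a right eigenvalue of $C_{f_{r_1}}$. One way to see it:
\begin{itemize}
\item $C_{f_{r_1}}=C_g^{H}$, where $C_g$ is the left companion matrix of $g(w)=w^{n+1}-\bar v_nw^{n-1}-\dots-\bar v_2w-\bar v_1$, and $g(\bar z)=\overline{f_{r_1}(z)}$.
\item The zero $\bar z$ of $g$ is a right eigenvalue of $C_g$, with eigenvector $(1,\bar z,\dots,\bar z^{\,n})^{T}$.
\item A matrix and its conjugate transpose have the same similarity classes of right eigenvalues, since their complex adjoint matrices are conjugate transposes of each other with spectra closed under conjugation.
\item $z$ is similar to $\bar z$, and the set of right eigenvalues is closed under similarity.
\end{itemize}

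\textbf{The orthogonality relation.} Your correction of the paper goes the wrong way. Let $x_k,y_k$ denote the $k$th columns of $X$ and $Y$. If $X$ and $Y$ are supported on disjoint sets of columns, then $XY^{H}=\sum_k x_ky_k^{H}=0$, which is exactly the relation the paper states.

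The entries of $X^{H}Y$, by contrast, are the inner products $x_i^{H}y_j$ of columns of $X$ with columns of $Y$, and these need not vanish. Here $(M^{H}N)_{n+1,n}=\sum_{i=1}^{n-1}\bar v_iv_{i+1}$ and $(G^{H}M)_{j,n+1}=v_{j+1}$. So $(C^{2}_{f_{r_1}})^{H}C^{2}_{f_{r_1}}\neq M^{H}M+N^{H}N+G^{H}G$ in general.

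The repair is to expand the other Gram product, $C^{2}_{f_{r_1}}(C^{2}_{f_{r_1}})^{H}=MM^{H}+NN^{H}+GG^{H}$, and use $\|A\|_2^{2}=\|AA^{H}\|_2$ from Lemma 1. This gives the same bound $1+2\sum_{j=1}^{n}|v_j|^{2}$.

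The paper's display has the same mismatch: it expands $(M+N+L)^{H}(M+N+L)$ right after stating $MN^{H}=0$. The $AA^{H}$ expansion is the consistent one.
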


\begin{proof}
Consider the companion matrix  $C_{f_{r_{1}}}^2$ and decompose it as $C_{f_{r_{1}}}^2 = M + N + L$, where:
\[
M = 
\begin{bmatrix}
0 & 0 & \cdots & 0&0 & 0 \\
0 & 0 & \cdots & 0&0& v_{1} \\
0 & 0 & \cdots & 0 &0& v_{2} \\
\vdots  &\vdots & \ddots &\vdots& \vdots & \vdots \\
0 & 0 & \cdots & 0 & 0&v_{n-1}\\
0 & 0 & \cdots & 0 & 0&v_{n}
\end{bmatrix},
\quad
N = 
\begin{bmatrix}
0 & 0 & \cdots & 0&v_{1} & 0 \\
0 & 0 & \cdots & 0&v_{2}& 0 \\
0 & 0 & \cdots & 0 &v_{3}& 0 \\
\vdots  &\vdots & \ddots &\vdots& \vdots & \vdots \\
0 & 0 & \cdots & 0 & v_{n}&0\\
0 & 0 & \cdots & 0 & 0&0
\end{bmatrix},
\quad
L = 
\begin{bmatrix}
0 & 0 \\
I_{n-2} & 0
\end{bmatrix},
\]
where $I_{n-2} $ is identity matrix of order $n-2.$
Then
\[
 MN^{H}=NM^{H}=ML^{H}  = LM^{H} =   NL^{H}  =L N^{H}= 0.
\] Thus by Lemma \ref{lemma 2} and triangle inequality, we have
\begin{eqnarray*}
    \|C_{f_{r_{1}}}^2\|^2_{2} &=&\|(M +N +L)^{H}((M +N +L)\|_{2}\\&= & \|M^{H}M+N^{H}N+L^{H}L\|_{2}\\&\le&  \|M^{H}M\|_{2} + \|N^{H}N\|_{2} + \|L^{H}L\|_{2}\\&=&  2 \sum_{j=1}^{n} |v_j|^2+1.
\end{eqnarray*}
Thus:
\[
\|C_{f_{r_{1}}}^2\|_{2} \leq \left(1 + 2 \sum_{j=1}^{n} |v_j|^2 \right)^{1/2}.
\]

Since $|z| \leq \|C_{f_{r_{1}}}^2\|^{1/2}_{2}$ for any zero $z$ of $C_{f_{r_{1}}}$, we have:
\[
|z| \leq \left(1 + 2 \sum_{j=1}^{n} |v_j|^2 \right)^{1/4}.\]
Hence the result follows.
\end{proof}

\begin{theorem}
Let $f_{r_{1}}(z)= z^{n+1}-z^{n-1}v_{n}-z^{n-2}v_{n-1}-\dots- zv_{2}-v_{1}.$ Then for any zero  $z$ of $f_{r_{1}}$,
\[|z| \leq \left(1 + \gamma + 2 \sum_{j=1}^{n} |v_j|^2 \right)^{1/6},\]
where $\gamma = |v_{1}v_{n}|^2 + |v_{2}v_{n}|^2 + \sum_{j=1}^{n-1} |v_j+v_{n}v_{j+2}|^{2}, ~ v_{n+1}=0 $.
\end{theorem}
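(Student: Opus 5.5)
We follow the pattern of the preceding theorem, this time working with the cube $C_{f_{r_1}}^3$ displayed just before the statement. Since $C_{f_{r_1}}$ is $(n+1)\times(n+1)$, every zero $z$ of $f_{r_1}$ is a left eigenvalue of $C_{f_{r_1}}$. If $C_{f_{r_1}}x=zx$, then iterating gives $C_{f_{r_1}}^3x=C_{f_{r_1}}^2(zx)=\cdots=z^3x$, because left multiplication by the scalar $z$ commutes with multiplication by the matrix. Hence $z^3$ is a left eigenvalue of $C_{f_{r_1}}^3$. By Lemma 3 this gives $|z|^3\le\rho_l(C_{f_{r_1}}^3)\le\|C_{f_{r_1}}^3\|_2$. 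It therefore suffices to show
\[
\|C_{f_{r_1}}^3\|_2^2\le 1+\gamma+2\sum_{j=1}^{n}|v_j|^2 .
\]

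For this we split $C_{f_{r_1}}^3=M+N+L+G$ according to column supports. Here $M$ is the last column of $C_{f_{r_1}}^3$, with entries $v_1v_n,\ v_2v_n,\ v_3v_n+v_1,\dots,v_n^2+v_{n-2},\ v_{n-1}$. $N$ is the second-to-last column $(0,v_1,v_2,\dots,v_{n-1},v_n)^T$, $L$ is the third-to-last column $(v_1,\dots,v_n,0)^T$, and $G$ is the shifted identity block $\begin{bmatrix}0&0\\ I_{n-2}&0\end{bmatrix}$ occupying the remaining columns. The four pieces have pairwise disjoint column supports. Consequently the products $X^HY$ vanish for distinct pieces, so $(C_{f_{r_1}}^3)^H C_{f_{r_1}}^3=M^HM+N^HN+L^HL+G^HG$ is block diagonal. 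Lemma 1 and the triangle inequality then give
\[
\|C_{f_{r_1}}^3\|_2^2\le \|M^HM\|_2+\|N^HN\|_2+\|L^HL\|_2+\|G^HG\|_2 .
\]
In fact, since the matrix is block diagonal with blocks $\|m\|^2,\|n\|^2,\|l\|^2, I$, its norm is the maximum of these, so the sum is a generous bound.

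The next step evaluates each term, using that for a single column $c$ one has $\|c\|_2^2=\sum|c_i|^2$ and $\|G^HG\|_2=1$. This gives $\|N\|_2^2=\|L\|_2^2=\sum_{j=1}^n|v_j|^2$, which accounts for the term $2\sum|v_j|^2$. It also gives $\|M\|_2^2=|v_1v_n|^2+|v_2v_n|^2+\sum|v_{j}+v_{j+2}v_n|^2+|v_{n-1}|^2$.

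The main obstacle is matching this last quantity with $\gamma$ as written. The natural entries are $v_{j+2}v_n+v_j$ with $v_n$ on the right, while the statement writes $v_nv_{j+2}$. The modulus of a quaternion product is multiplicative, but a sum is sensitive to order, so these need not agree. I would first recheck the computation of the entries of $C^3_{f_{r_1}}$, namely $(C^3)_{i,n+1}=\sum_k (C^2)_{ik}(C)_{k,n+1}$, to confirm the order of the factors. I would then interpret $\gamma$ accordingly, with the convention $v_{n+1}=0$ absorbing the final entry $v_{n-1}$ (the $j=n-1$ term becomes $|v_{n-1}+v_nv_{n+1}|^2=|v_{n-1}|^2$). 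I would also check the index range of the sum, which runs over rows $3$ through $n+1$.

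With $\|M\|_2^2=\gamma$ in hand, summing the four estimates gives $\|C_{f_{r_1}}^3\|_2^2\le 1+\gamma+2\sum|v_j|^2$. Taking sixth roots in $|z|^6\le\|C_{f_{r_1}}^3\|_2^2$ yields the stated bound. Dimension edge cases, for example $n=3$ where $G$ is small, are handled by the same computation.
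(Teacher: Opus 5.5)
\textbf{Verdict: genuine gap.} Your decomposition of $C_{f_{r_1}}^3$ into $M,N,L,G$ is the paper's, but two of your justifications are wrong. The eigenvalue step needs a missing ingredient; the norm step has a backwards orthogonality relation that is easy to repair.

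\textbf{The eigenvalue step.} Over $\mathbb{H}$ a left scalar does not pass through a matrix. We have $(C(zx))_i=\sum_k c_{ik}zx_k$ but $(z\,Cx)_i=\sum_k zc_{ik}x_k$, and the entries $v_k$ of $C_{f_{r_1}}$ need not commute with $z$. So $C_{f_{r_1}}x=zx$ does not give $C_{f_{r_1}}^3x=z^3x$. In fact left spectra do not respect powers in general. Take $A=\left(\begin{smallmatrix}0&i\\ j&0\end{smallmatrix}\right)$ and $\lambda=(i+j)/\sqrt2$. Then $Ax=\lambda x$ with $x=(1,-i\lambda)^T$, since $\lambda i\lambda=-j$. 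Yet $\lambda^2=-1$ is not a left eigenvalue of $A^2=\mathrm{diag}(k,-k)$, whose left spectrum is $\{k,-k\}$.

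What does commute with matrices is right scalar multiplication, $C(x\mu)=(Cx)\mu$. The ingredient you are missing is that every zero $z$ of $f_{r_1}$ (a left eigenvalue of the companion matrix) is also a \emph{right} eigenvalue of $C_{f_{r_1}}$. The paper relies on this, via \cite{SR}, whenever it bounds $|z|$ through $\rho_r$. It holds because the right spectrum of the companion matrix is a union of similarity classes $\{s^{-1}\mu s\}$, and each zero lies in one of them. With $C_{f_{r_1}}x=xz$ you get $C_{f_{r_1}}^3x=xz^3$, hence $|z|^3\le\rho_r(C_{f_{r_1}}^3)\le\|C_{f_{r_1}}^3\|_2$ by Lemma 3.

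\textbf{The norm step.} For pieces with disjoint column supports it is $XY^H$ that vanishes, which is what the paper records ($MN^H=NM^H=\cdots=0$). It is not $X^HY$. The product $X^HY$ contains the inner products of the columns of $X$ with those of $Y$; for instance, $L^HN$ has the entry $\sum_{i=2}^{n}\overline{v_i}\,v_{i-1}$ in position $(n-1,n)$. So $(C_{f_{r_1}}^3)^HC_{f_{r_1}}^3$ is not block diagonal. Your remark that $\|C_{f_{r_1}}^3\|_2^2$ equals the largest squared column norm is also false; the spectral norm can exceed every column norm. The correct identity is $C_{f_{r_1}}^3(C_{f_{r_1}}^3)^H=MM^H+NN^H+LL^H+GG^H$. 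Combined with $\|A\|_2^2=\|AA^H\|_2$ (Lemma 1) and the triangle inequality, it gives exactly the sum bound you want. With these two repairs your argument coincides with the paper's.

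\textbf{The order in $\gamma$.} You read this correctly. The last column of $C_{f_{r_1}}^3$, and the paper's own $M$, has entries $v_{j+2}v_n+v_j$. So the argument, yours and the paper's, proves the bound with $|v_j+v_{j+2}v_n|^2$ in $\gamma$. The order $v_nv_{j+2}$ written in the statement is not what the decomposition yields, and the paper's proof passes over this.
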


\begin{proof}
We consider $C_{f_{r_1}}^3$ and use a decomposition strategy similar to Theorem 2. \\Let
\[
C_{f_{r_{1}}}^3 = M + N + L + G,
\]
\[M = 
\begin{bmatrix}
0 & 0 & \cdots & 0&0 & v_{1}v_{n} \\
0 & 0 & \cdots & 0&0& v_{2} v_{n}\\
0 & 0 & \cdots & 0 &0& v_{3}v_{n} +v_{1}\\
\vdots  &\vdots & \ddots &\vdots& \vdots & \vdots \\
0 & 0 & \cdots & 0 & 0&v^{2}_{n}+v_{n-2}\\
0 & 0 & \cdots & 0 & 0&v_{n-1}
\end{bmatrix},
\quad
N = 
\begin{bmatrix}
0 & 0 & \cdots & 0&0 & 0 \\
0 & 0 & \cdots & 0&v_{1}& 0 \\
0 & 0 & \cdots & 0 &v_{2}& 0 \\
\vdots  &\vdots & \ddots &\vdots& \vdots & \vdots \\
0 & 0 & \cdots & 0 & v_{n-1}&0\\
0 & 0 & \cdots & 0 & v_{n}&0
\end{bmatrix},
\]
\[
L= 
\begin{bmatrix}
0 & 0 & \cdots  &v_{1} & 0 &0\\
0 & 0 & \cdots  &v_{2}& 0&0 \\
0 & 0 & \cdots  &v_{3}& 0&0 \\
\vdots  &\vdots & \ddots &\vdots& \vdots & \vdots \\
0 & 0 & \cdots  & v_{n}&0&0\\
0 & 0 & \cdots  & 0&0&0
\end{bmatrix},
\quad G=\begin{bmatrix}
    0&0\\I_{n-3}&0
\end{bmatrix},\]
where $I_{n-3} $ is identity matrix of order $n-3.$
Then
\[
 MN^{H}=NM^{H}=ML^{H}= LM^{H} = NL^{H}=L N^{H}\]\[=MG^{H}=GM^{H}=NG^{H}=GN^{H} =LG^H=GL^{H}=0.
\]
Thus by Lemma \ref{lemma 1} and triangle inequality, we get
\[
\|C_{f_{r_1}}^3\|^2_{2} \leq 1 + \gamma + 2 \sum_{j=1}^{n} |v_j|^2,
\]
 where $\gamma = |v_{1}v_{n}|^2 + |v_{2}v_{n}|^2 + \sum_{j=1}^{n-1} |v_j+v_{n}v_{j+2}|^{2},  v_{n+1}=0 $\ $C_{p_1}^3$.

Therefore:
\[
\|C_{f_{r_{1}}}^3\|_{2} \leq \left(1 + \gamma + 2 \sum_{j=1}^{n} |v_j|^2 \right)^{1/2}.
\]

Since $|z| \leq \|C_{f_{{r}_1}}^3\|^{1/3}_{2}$ for every zero $z$ of $C_{f_{r_1}}$, Therefore,
\[
|z| \leq \left(1 + \gamma + 2 \sum_{j=1}^{n} |v_j|^2 \right)^{1/6}.\]
\end{proof}

\section{Conclusion}

We have derived new bounds for zeros of quaternion polynomial by carefully addressing the challenges posed by non-commutativity. Our approach leverages the well behaved theory of left eigenvalues and employs matrix inequalities involving spectral norm adapted to quaternion matrices.
These results demonstrate that despite the complexities of quaternion algebra, many classical results from polynomial theory and matrix analysis can be extended through appropriate modifications. The bounds derived here provide practical tools for locating zeros of quaternion polynomials, with potential applications in quaternion signal processing, quantum mechanics, and computer graphics.

\section{Declaration}
\textbf{Availabilty of data and material}\\
 Data availability is not applicable to this article as no new data were created or analyzed in this study.\\

\noindent \textbf{Competing interests}\\
The author declare that they have no competing interests.\\

\noindent \textbf{Funding}\\
The research of first author is supported by DST Inspire Fellowship (ID No. IF210629).


\begin{thebibliography}{999}
        \bibitem{ASL}
S. L. Adler, \textit{Quaternionic Quantum Mechanics and Quantum Fields}, Oxford University Press, New York, (1995).

 \bibitem{ASS}
S. S. Ahmad and I. Ali, \textit{ Bounds for eigenvalues of matrix polynomials over quaternion division algebra}, Advances in Applied Clifford Algebras, \textbf{26} (2016), 1095--1125.
 
\bibitem{ASS2} S. S. Ahmad and I. Ali, \textit{Localization theorems for matrices and bounds for the zeros of polynomials over quaternion division algebra}, Filomat, \textbf{32} (2018), 553--573.

\bibitem{BA}A. Baker, \textit{Right eigenvalues for quaternionic matrices: a topological approach}, Linear Algebra and Its Applications, \textbf{286} (1999), 303--309.

 \bibitem{BJL} J.L. Brenner, \textit{Matrices of quaternions}, Pacific Journal of Mathematics, \textbf{1}, (1951), 329–335.
 
 \bibitem{CJH} J. H. Conway and D. A. Smith, \textit{On Quaternions and Octonions: Their Geometry, Arithmetic, and Symmetry}, A K Peters, Natick, (2002).

\bibitem{DR} I. Dar, N. A. Rather and I. Faiq,\textit{ Bounds on the zeros of quaternionic polynomials using matrix methods}, Filomat, \textbf{38:9} (2024),  3001--3010.
\bibitem{GS} G. Gentili, D. C Struppa, \textit{On the multiplicity of zeros of polynomials with quaternionic coefficients,} Milan Journal of Mathematics, \textbf{76}, 15-25 (2008)
\bibitem{ABG}
A. B. Gerstner, R. Byers and V. Mehrmann, \textit{A quaternion QR algorithm}, Numerische Mathematik, \textbf{55} (1989), 83--95.

\bibitem{GM} B. Gordon and T. S. Motzkin, \textit{On the zeros of polynomials over division rings}, Transactions of the American Mathematical Society, \textbf{116} (1965), 218--226.

\bibitem{HTL} T. L. Hankins, \textit{Sir William Rowan Hamilton}, The Johns Hopkins University Press, Baltimore, (1980).

\bibitem{HL} 
L. Huang and W. So, \textit{On the left eigenvalues of a quaternionic matrix}, Linear Algebra and Its Applications \textbf{332}, (2001), 105-116.

\bibitem{JD} D. Janovsk\'{a} and G. Opfer, \textit{A note on the computation of all zeros of simple quaternionic polynomials}, SIAM Journal on Numerical Analysis, \textbf{48} (2010), 244--256.

\bibitem{JO} D. Janovsk\'{a} and G. Opfer, \textit{The classification and the computation of the zeros of quaternionic, two sided polynomials}, Numerische Mathematik \textbf{115} (2010), 81--100.

\bibitem{JQ} O. Jan and I. Qasim, \textit{On the location of eigenvalues of quaternion matrix polynomials}, Applicable Analysis and Discrete Mathematics, \textbf{19} (2025), 411--421.

\bibitem{BK} B. Kalantari, \textit{Algorithms for quaternion polynomial root finding}, Journal of Complexity, \textbf{29} (2013), 302--322.
\bibitem{KI} M. W. Khadim, I. Ali, \textit{Bounds for the zeros of Quaternionic polynomials}, Gulf Journal of Mathematics, \textbf{22} (2026), 1-16. 
\bibitem{IN} I. Niven, \textit{Equations in Quaternions}, The American Mathematical Monthly, \textbf{48} (1941), 654--661.
\bibitem{O} G. Opfer, \textit{Polynomials and Vandermonde matrices over the field of quaternions}, Electronic Transactions on Numerical Analysis \textbf{36} (2009), 9--16.

\bibitem{PR} 
R. Pereira, \textit{ Quaternionic polynomials and behavioral systems}. Ph.D. thesis, University of Aveiro (2006).

\bibitem{PS} A. Pogorui and M. Shapiro, \textit{ On the structure of the set of zeros of quaternionic polynomials}, Complex Variables and Elliptic Functions, \textbf{49} (2004), 379--388.
\bibitem{IQ} I. Qasim, \textit{ Location of zeros of quaternionic polynomials}, Annali dell'Università di Ferrara \textbf{71} (2025), 14--28.
\bibitem{RL} 
L. Rodman, \textit{Stability of invariant subspaces of quaternion matrices}, Complex Analysis and Operator Theory \textbf{6} (2012), 1069--1119.

\bibitem{SR} 
R. Serodio, E. Pereira and J. Vitória, \textit{Computing the zeros of quaternion polynomials}, Computers \& Mathematics with Applications \textbf{42} (2001), 1229--1237.



\bibitem{WRM} R. M. W. Wood, \textit{Quaternionic eigenvalues}, Bulletin of the London Mathematical Society, \textbf{17} (1985), 137--138.
\bibitem{WRL}  R. L. Wu, \textit{Distribution and estimation for eigenvalues of dual quaternion matrices}, Computers \& Mathematics with Applications \textbf{55} (2008), 1998--2004.


\bibitem{ZF1} 
F. Zhang, \textit{Gershgorin type theorems for quaternionic matrices}, Linear Algebra and Its Applications \textbf{424} (2007), 139--155.


\bibitem{ZF2} 
F. Zhang, \textit{Quaternions and matrices of quaternions}, Linear Algebra and Its Applications \textbf{251} (1997), 21--57.


\bibitem{ZJ} 
L. Zou, Y. Jiang and J. Wu, \textit{Location for the right eigenvalues of quaternion matrices}, Journal of Applied Mathematics and Computing \textbf{38} (2012), 71--83.




\end{thebibliography}
\end{document}